\newtheorem{theorem}{Theorem}[section]
\newtheorem{lemma}[theorem]{Lemma}
\newtheorem{corollary}[theorem]{Corollary}
\newtheorem{proposition}[theorem]{Proposition}
\numberwithin{equation}{section}
\newcommand{\RR}[0]{\mathbb{R}}
\newcommand{\pd}[2]{\frac{\partial #1}{\partial#2}}
\newcommand{\pdt}[0]{\frac{\partial}{\partial t}}
\newcommand{\delb}[0]{\overline{\nabla}}
\newcommand{\Rc}[0]{\operatorname{Rc}}
\newcommand{\Rm}[0]{\operatorname{Rm}}
\newcommand{\End}[0]{\operatorname{End}}
\newcommand{\Hc}[0]{\mathcal{H}}
\newcommand{\Hol}[0]{\operatorname{Hol}}
\newcommand{\hol}[0]{\mathfrak{hol}}
\begin{document}

\title[Full holonomy group under Ricci flow]{The full holonomy group under the Ricci flow}

\author[M. Cook]{Mary Cook}
\address{Arizona State University, Tempe, AZ, 85287}
 \curraddr{}
\email{mkcook@asu.edu}

\author[B. Kotschwar]{Brett Kotschwar}
\address{Arizona State University, Tempe, AZ, 85287}
 \curraddr{}
\email{kotschwar@asu.edu}
\thanks{The second author was partially supported by Simons Foundation grant \#359335.}

\subjclass[2010]{Primary 53E20}

\date{}

\dedicatory{}

\begin{abstract}
We give a short, direct proof that the full holonomy group of a solution to the Ricci flow is invariant up to isomorphism using the invariance of the reduced
holonomy under the flow.
\end{abstract}

\maketitle

\section{Introduction}

In this note, we consider the holonomy of a family of manifolds $(M, g(t))$ evolving by the Ricci flow.
Recall that the holonomy $\Hol_p(g)$ of a Riemannian manifold $(M, g)$ based at $p\in M$ is the group of endomorphisms of $T_pM$ produced by parallel translation
about piecewise smooth loops based at $p$. The reduced holonomy $\Hol_p^0 (g)$ is the subgroup of $\Hol_p(g)$ produced by parallel translation around
piecewise smooth nullhomotopic loops based at $p$. Any identification of $T_pM$ with $\RR^n$ identifies $\Hol_p(g)$ and $\Hol^0_p(g)$ with subgroups of $\operatorname{O}(n)$ and $\operatorname{SO}(n)$ up to conjugation. When $M$ is connected, the conjugacy classes of these subgroups are independent of the basepoint $p$, and it is common to refer to any fixed representatives $\Hol(g)$ and $\Hol^0(g)$ of these classes as \emph{the} holonomy and reduced holonomy groups of 
$(M, g)$.

It is an old observation of Hamilton \cite{Hamilton4D, HamiltonSingularities} that, under mild hypotheses on the solution $g(t)$, the reduced holonomy $\operatorname{Hol}^0(g(t))$ \emph{cannot expand}: if $\operatorname{Hol}^0(g(t))$ is initially restricted to some subgroup $G\subset \operatorname{SO}(n)$, then it remains so, provided that the solution is complete and of bounded curvature or
otherwise
belongs to some class in which the equation is well-posed. This can be proven with a short argument using only general ingredients. First, one passes to the universal cover and applies Berger's classification. It follows from this classification that one need only verify that Einstein, product, and K\"ahler structures are preserved by the flow. Using the short-time existence theorems of \cite{Hamilton3D, Shi1, Shi2} where needed, one can construct
complete Einstein, product, and K\"ahler solutions to the flow starting from given initial data with those characteristics. The uniqueness results of Hamilton and Chen-Zhu \cite{Hamilton3D, ChenZhu} then imply that
these special solutions are the only solutions within the class with the given initial data. 

In  \cite{KotschwarHolonomy}, the second author later showed that the reduced holonomy $\Hol^0(g(t))$ of a complete solution of uniformly bounded curvature also \emph{cannot contract} and, consequently, that $\Hol^0(g(t))\cong \Hol^0(g(0))$ for all time $t$. However, even with Berger's classification, the problem of non-contraction does not reduce in the same way to one of backward uniqueness of solutions to the Ricci flow. While it is still only necessary to verify that the above three special structures are preserved under the flow, it is not in general possible to solve the parabolic terminal-value problems needed to obtain ``competitor'' solutions with these special structures to compare against the original solution. (The one exception is the Einstein case, in which suitable competitors can be obtained by scaling the initial metric homothetically.) Instead, in \cite{KotschwarHolonomy}, the problem is framed as one of backward uniqueness of the solutions to a related prolonged system which may in turn be treated by the general methods of \cite{KotschwarRFBU, KotschwarFrequency}. This formulation also leads to an alternative proof of the non-expansion of $\Hol^0(g(t))$ (which is closer in spirit to that suggested in \cite{HamiltonSingularities} than the argument sketched above). 

The above results leave open the question of whether the full holonomy $\Hol(g(t))$ is preserved by the Ricci flow. Both $\Hol(g)$ and $\Hol^0(g)$ are fundamentally global invariants of the manifold, however, at each $p$, the reduced holonomy $\Hol_p^0(g)$ is the connected component of the identity in $\Hol_p(g)$, and is determined by the holonomy Lie algebra $\mathfrak{hol}_p(g)$. Since the latter contains the image of the curvature operator $\Rm$ at each point $p$, it is possible to test for the preservation of the reduced holonomy along the flow by studying the kernel of the curvature operator of the solution.

For the full holonomy, which carries information about the global topology of the manifold, there is no such convenient infinitesimal characterization. Assuming the invariance of the reduced holonomy along the flow, the essence of the problem one faces is this: if the lift $(\tilde{M}, \tilde{g}(t))$ to the universal cover $\tilde{M}$ of a solution $(M, g(t))$ to the Ricci flow admits a parallel family of tensors $\tilde{A}(t)$, and if, at some time $t_0$, the tensor $\tilde{A}(t_0)$ descends to a parallel tensor $A(t_0)$ on $M$, then $\tilde{A}(t)$ descends to a smooth parallel family of tensors $A(t)$ on $M$ at all times $t$. One natural strategy to attack this problem is to extend $A(t_0)$ to a family of tensors defined for $t > t_0$ as the solution of an appropriate heat-type equation coupled with the Ricci flow and to argue from the maximum principle that $\nabla_{g(t)}A(t)\equiv 0$; however, this approach does not apply to times $t < t_0$.

In this paper, we show that the invariance (up to isomorphism) of $\Hol(g(t))$ along a solution $g(t)$ to the Ricci flow can nevertheless be obtained from that of $\Hol^0(g(t))$ by a direct argument which applies equally well forward and backward in time. The precise statement of our main theorem is the following.

\begin{theorem}\label{thm:fullholonomy} 
Let $g(t)$ be a solution to the Ricci flow on $M\times [0, T]$ such that $(M, g(t))$ is complete for each $t\in [0, T]$,
and $\sup_{M\times[0, T]}|\Rm|(x, t) < \infty$. Then, for all $q\in M$ and $t\in [0, T]$,
\begin{equation}
\Hol_q(g(t)) = \psi_{t}\circ\Hol_q(g(0))\circ \psi^{-1}_{t},
\end{equation}
where $\psi_{t} \in \operatorname{O}(T_qM, g_q(t))$ satisfies
\[
  \frac{d\psi_{t}}{dt} = \Rc\circ \psi_{t}, \quad \psi_{0} = \operatorname{Id},
\]
for $t\in [0, T]$. Here $\Rc = \Rc(g_q(t)): T_qM \to T_qM$.
\end{theorem}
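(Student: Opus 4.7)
The plan is to translate the claimed conjugation relation into a correspondence between parallel tensor fields on $M$, then establish this correspondence by passing to the universal cover and descending via an equivariance argument. First observe that since $\Hol_q(g(t))$ and $\psi_t\Hol_q(g(0))\psi_t^{-1}$ are both closed subgroups of the compact group $\operatorname{O}(T_qM, g_q(t))$, and closed subgroups of compact Lie groups are determined by their invariants in the tensor representations $T^{k,l}(T_qM)$, it suffices to show that the natural induced action of $\psi_t$ on $\bigoplus_{k,l}T^{k,l}(T_qM)$ carries $\Hol_q(g(0))$-invariants bijectively to $\Hol_q(g(t))$-invariants. By the standard identification of the invariants of $\Hol_q(g)$ with the values at $q$ of globally parallel tensor fields on $(M,g)$, this reduces to producing a bijection $A\leftrightarrow A(t)$ between parallel tensors of $(M,g(0))$ and $(M,g(t))$ satisfying the pointwise relation $A(t)_q = \psi_t\cdot A_q$.

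Passing to the universal cover $\pi\colon\tilde M\to M$ and the lift $\tilde g(t) = \pi^{*}g(t)$ (which remains complete with bounded curvature), $\Hol_{\tilde q}(\tilde g(t)) = \Hol^0_{\tilde q}(\tilde g(t))$ because $\tilde M$ is simply connected. The invariance of the reduced holonomy established in \cite{KotschwarHolonomy} --- a two-sided statement, giving both non-expansion and non-contraction of $\Hol^0$ along the flow --- then supplies on $\tilde M$ the required bijection between $\tilde g(0)$-parallel and $\tilde g(t)$-parallel tensor fields. The natural parabolic evolution of a parallel tensor under the Ricci flow is pointwise (since $\nabla A = 0$ annihilates the Laplacian term), and an elementary computation identifies this pointwise evolution at $p$ with the ODE defining $\tilde\psi_t|_p$ acting in the appropriate tensor representation; the bijection therefore takes the pointwise form $\tilde A(t)_p = \tilde\psi_t|_p\cdot \tilde A_p$ at every $p\in\tilde M$.

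To descend the correspondence to $M$, it is enough to check that if $\tilde A$ is invariant under the deck group, then so is $\tilde A(t)$. The key fact is that each deck transformation $\phi$ is an isometry of every $\tilde g(t)$, hence intertwines the Ricci endomorphisms at $p$ and $\phi(p)$. By the uniqueness of solutions to $\dot\psi = \Rc\circ\psi$, this promotes to the equivariance $d\phi_p\circ\tilde\psi_t|_p = \tilde\psi_t|_{\phi(p)}\circ d\phi_p$, which together with the pointwise formula and the assumed deck-invariance of $\tilde A$ immediately yields the corresponding invariance of $\tilde A(t)$. Hence $\tilde A(t)$ descends to a $g(t)$-parallel tensor $A(t)$ on $M$ with $A(t)_q = \psi_t\cdot A_q$, and since $\tilde\psi_t$ is invertible the same reasoning run with $\psi_t^{-1}$ produces the inverse assignment, completing the bijection of parallel tensor spaces and hence the theorem.

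The main obstacle I anticipate is the second step: extracting the sharp pointwise realization of the universal-cover correspondence via $\tilde\psi_t$ in a form that runs in both time directions. The absence of a heat-type evolution for the candidate $A(t)$ backward in time, identified explicitly in the introduction as the essential difficulty, is precisely what forces us to rely on the bidirectional character of the reduced-holonomy invariance rather than a direct maximum-principle argument. Once that pointwise form is in hand, however, the descent reduces to the routine equivariance check above, and no backward-in-time PDE argument is needed at the level of $M$.
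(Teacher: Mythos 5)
There is a genuine gap, and it occurs at the very first step. Your reduction rests on the claim that $\Hol_q(g(t))$ is a \emph{closed} subgroup of $\operatorname{O}(T_qM, g_q(t))$ and is therefore determined by its invariants in the tensor representations. The full holonomy group of a complete Riemannian manifold need not be closed: while $\Hol^0_q$ is compact, the quotient $\Hol_q/\Hol^0_q$ is only countable and can be infinite, in which case $\Hol_q$ cannot be compact. A concrete example --- which is even a static, complete, bounded-curvature solution of the Ricci flow --- is the flat manifold $(\RR\times\RR^2)/\mathbb{Z}$ with $\mathbb{Z}$ acting by $(s,v)\mapsto(s+1,R_\alpha v)$, $\alpha$ an irrational multiple of $2\pi$: there $\Hol_q$ is the infinite cyclic group generated by $R_\alpha$, dense but not closed in $\operatorname{SO}(2)$. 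Since two subgroups with the same closure have identical invariants in every continuous finite-dimensional representation, the space of parallel tensor fields determines only $\overline{\Hol_q(g)}$; even a complete execution of your program would therefore prove only that the \emph{closures} of $\Hol_q(g(t))$ and $\psi_t\circ\Hol_q(g(0))\circ\psi_t^{-1}$ agree. This is precisely why the paper does not reduce to parallel tensors: it fixes an individual loop $\gamma$, shows via Uhlenbeck's trick and the identity $D_s\tfrac{\partial}{\partial t}V=-\imath^{-1}\,\mathrm{div}\Rm(\dot\gamma)\,\imath V$ that the parallel transport $P(t)$ around $\gamma$ satisfies $\tfrac{\partial}{\partial t}P=PB$ with $B(t)\in\hol_q(\delb)$, and then uses an ODE/Lie-subgroup lemma to keep $P(t)$ inside $\Hol_q(\delb^{t_0})$. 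That argument tracks each holonomy element separately and requires no closedness.

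A second, independent gap is the step you yourself flag as the main obstacle: the pointwise realization $\tilde A(t)_p=\tilde\psi_t|_p\cdot\tilde A_p$ at \emph{every} $p\in\tilde M$. The reduced-holonomy invariance of \cite{KotschwarHolonomy} yields, for each $t$, an abstract isomorphism between the spaces of parallel tensors at times $0$ and $t$ (both being identified with the fixed subspace of $\hol_{\tilde q}$-invariants at the single basepoint $\tilde q$); it says nothing about how the two parallel extensions compare at other points, and your descent argument needs the pointwise formula at $\phi(\tilde q)$ for every deck transformation $\phi$. Forward in time one can obtain it from a heat-type evolution and the maximum principle, but backward in time no such evolution exists, and the assertion that ``the parabolic evolution is pointwise since $\nabla A=0$'' presupposes that the backward extension is parallel, which is what is to be proved. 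Filling this step is essentially the content of the paper's Propositions 3.1 and 3.2 (the variation of parallel transport along curves is tangent to $\hol$), so it cannot be dismissed as an elementary computation.
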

 
In particular, Theorem \ref{thm:fullholonomy} implies that if a complete Ricci flow with bounded curvature is K\"ahler or splits as a product on any time-slice, it must be K\"ahler or split as a product on all time-slices. In Section \ref{sec:paralleltensors} we show that the complex and product structures will in these cases be independent of time.  The preservation of such structures \emph{forward} in time is of course a well-known property of the Ricci flow. We have already sketched one proof of this fact above; one feature of the argument below (when used in conjunction with \cite{KotschwarHolonomy}) is that it does not make use of the short-time existence and uniqueness of solutions to the equation.

In \cite{KotschwarKaehler}, the second author has also previously shown that the preservation of global K\"ahlerity under the flow follows from the preservation of local K\"ahlerity.  Our proof of Theorem \ref{thm:fullholonomy} is in some sense a generalization of the argument given there. Further results concerning the preservation of the K\"ahler property for Ricci flows with instantaneously bounded curvature can be found in \cite{HuangTam, LiuSzekylhidi}.

\section*{Acknowledgement} The question addressed in this note was brought to the authors' attention by Thomas Leinster and Miles Simon.
The authors wish to thank them for their interest and for subsequent related discussions. 

\section{The preservation of reduced holonomy and a reformulation}
Let $g(t)$ be a smooth solution to the Ricci flow
\begin{equation}\label{eq:rf}
  \pdt g = -2\Rc(g),
 \end{equation}
on $M\times [0, T]$. The holonomy groups $\Hol_p^0(g(t))$ and $\Hol_p(g(t))$ based at a point $p\in M$
are naturally represented as subgroups of the orthogonal group $\operatorname{O}(T_pM, g_p(t))$ relative to the time-varying inner product $g_p(t)$ at $p$. Using Uhlenbeck's trick, we can transform Theorem \ref{thm:fullholonomy} into an equivalent statement for a family of connections whose holonomy groups are instead realized as subgroups of some \emph{fixed} representation
of the orthogonal group.

\subsection{Uhlenbeck's trick}
Fix $t_0 \in [0, T]$ and let $E$ be a vector bundle isomorphic to $TM$ by some fixed isomorphism $\imath_{t_0}:E\to TM$. Using $\imath_{t_0}$, we equip $E$ with the bundle metric
$h$ defined by
\[
  h_q(V, W) = g_q(t_0)(\imath_{t_0}V, \imath_{t_0}W)
\]
for $q \in M$ and $V$, $W\in E_q$. Then, we extend $\imath_{t_0}$ forward and backward in time as the solution to the fiber-wise ODE
\begin{equation}
 \label{eq:uhlenbeck}
     \pd{\imath_t}{t} = \Rc \circ \imath_t,
\end{equation}
where $\Rc = \Rc(g_q(t))$,
to obtain a family $\imath_t: E\to TM$
of bundle isomorphisms for $t\in [0, T]$. With this extension, $\imath_t: (E, h) \to (TM, g(t))$ is in fact a bundle isometry for each $t\in [0, T]$.

Let $\nabla = \nabla^t$ denote the Levi-Civita connection of $g(t)$. We will study the holonomy of $\nabla$ via that of the family of pull-back connections $\delb = \delb^{t}$ on $E$ defined by
\begin{equation}\label{eq:delbtdef}
 \delb^{t}_X V = \imath_{t}^{-1}(\nabla^t_X (\imath_{t}V))
\end{equation}
for $X\in TM$ and $V\in \Gamma(E)$, where $\Gamma(E)$ denotes the space of smooth sections of $E$. The metric $h$ is compatible with the connection $\delb^t$, and the holonomy groups of $\delb^t$ and $\nabla^t$ are related by \begin{equation}\label{eq:holrel}
\Hol_q(\delb^t) = \imath_t^{-1} \circ \Hol_q(\nabla^{t}) \circ \imath_t.
\end{equation}

\subsection{Preservation of reduced holonomy}
As discussed in the introduction, the results in \cite{HamiltonSingularities}, \cite{KotschwarHolonomy} imply that when $(M, g(t))$ is complete and of uniformly bounded curvature, the reduced holonomy group of  $\nabla^t$ is isomorphic to the reduced holonomy of $\nabla^{t_0}$ for any $t$, $t_0\in [0, T]$. While the analytic arguments used in the verification of this fact for $t < t_0$ are fundamentally different from those used for the case $t_0 < t$, the backward-time and forward-time problems
can still be formulated in a unified way in terms of the image of the curvature operator $\Rm = \Rm(g(t))$ of the solution in the bundle of two-forms.

Let $\hol(\nabla^t)$ be the subbundle of  $\End(TM)$ whose fiber $\hol_q(\nabla^t)\subset \mathfrak{so}(T_q M, g_q(t))$ at $q$ 
is the Lie algebra of $\Hol^0_q(\nabla^t)\subset \operatorname{O}(T_q M, g_q(t))$. Let $\mathcal{H}(\nabla^t)\subset \wedge^2T^*M$ be the bundle of two-forms isomorphic
to $\hol(\nabla^t)$ via
the correspondence
\[
        A \in \hol_q(\nabla^t) \mapsto g_q(t)(A\cdot, \cdot) \in \Hc_q(\nabla^t),
\]
and let $\hol(\delb^t)\subset \End(E)$ and $\Hc(\delb^t)\subset\wedge^2 E^*$ denote the analogous families of bundles relative to the connection $\delb^t$. 

In Theorem 1.4 and Appendix A of \cite{KotschwarHolonomy} (compare Theorem 4.1 of \cite{HamiltonSingularities}), it is shown that $\Hc(\nabla^t)$ is time-invariant by first showing that the family of subbundles 
\[
H(t) = (\imath_{t})_{\ast}\Hc(\delb^{t_0})\subset \wedge^2(T^*M)
\]
is a $\nabla^t$-parallel subalgebra
which contains the image of $\Rm(g(t))$. From this, we deduce that $H(t)$ must coincide with $\Hc(\nabla^t)$.  Then, using the definition of $\imath_t$ and the fact that
$H(t)$ contains the image of $\Rm(g(t))$, one verifies by a short calculation that $H(t)$ must actually be independent of time.  Thus, 
\[
\Hc(\nabla^{t}) = H(t) = H(t_0) = \imath_{t_0}^*\Hc(\delb^{t_0}) = \Hc(\nabla^{t_0}).
\]
But, $\Hc(\delb^{t}) = \imath_t^{*}\Hc(\nabla^t)$, so
\[
\Hc(\delb^{t}) = \imath_t^*H(t) = \Hc(\delb^{t_0}),
\]
and $\Hc(\delb^{t})$ is also independent of time. 

Whereas the fibers of $\hol(\nabla^{t})$ are related to the fibers of $\Hc(\nabla^t)$ via the time-dependent isomorphisms
$A \mapsto g(t)(A \cdot, \cdot)$, the fibers of $\hol(\delb^t)$ and $\Hc(\delb^t)$ are related by the \emph{time-independent} isomorphism $A \mapsto h(A\cdot, \cdot)$.
Thus, the fibers $\hol_q(\delb^t)\subset \mathfrak{so}(E_q, h)$ are also independent of time, and it follows that the same is true of $\Hol_q^0(\delb^t) \subset \operatorname{O}(E_q, h)$. 

In terms of the framework we have established, the preservation of reduced holonomy can be restated precisely as follows 

\begin{theorem}[\cite{HamiltonSingularities}, \cite{KotschwarHolonomy}]\label{thm:holonomyalgebra} Let $g(t)$, $\nabla^t$, and $\delb^t$ be as above,
and assume that $(M, g(t))$ is complete and of uniformly bounded curvature for $t\in [0, T]$.
Then
$\hol_q(\delb^t)\subset \mathfrak{so}(E_q, h)$ is independent of $t$
for all $q\in M$.
Hence,
\[
  \Hol^0_q(\delb^t) = \Hol^0_q(\delb^{t_0}), \quad \Hol^0_q(\nabla^t) = \psi_t\circ\Hol^0_q(\nabla^{t_0})\circ \psi^{-1}_t,
\]
for all $q\in M$, $t\in [0, T]$, where $\psi_{t} = \imath_{t}\circ \imath_{t_0}^{-1}$.
\end{theorem}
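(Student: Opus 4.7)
My plan is to reduce the theorem, via the Uhlenbeck formalism already set up in the excerpt, to the time-invariance of a single subbundle of two-forms on $M$, and then to propagate this invariance across the chain $\Hc(\nabla^t) \to \Hc(\delb^t) \to \hol(\delb^t) \to \Hol^0_q(\delb^t)$; the critical transition is the middle one, from $\nabla^t$ to $\delb^t$, after which the time-dependence of the underlying inner product disappears.

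The first and only analytically substantive step is to invoke Theorem~1.4 and Appendix~A of \cite{KotschwarHolonomy}: these establish that the family
\[
  H(t) := (\imath_t)_*\Hc(\delb^{t_0}) \subset \wedge^2 T^*M
\]
is a $\nabla^t$-parallel bundle of Lie subalgebras of $\mathfrak{so}(T_qM, g_q(t))$ containing the image of $\Rm(g(t))$, that this identifies $H(t)$ with $\Hc(\nabla^t)$, and that a direct differentiation using \eqref{eq:uhlenbeck} then yields $\partial_t H(t) \equiv 0$. In particular, $\Hc(\nabla^t) = \Hc(\nabla^{t_0})$ as subbundles of $\wedge^2 T^*M$ for every $t \in [0,T]$.

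Next, I would push this invariance through the bundle isometry $\imath_t$. Since $\delb^t$ is by construction the pull-back of $\nabla^t$ via $\imath_t$, one has $\Hc(\delb^t) = \imath_t^*\Hc(\nabla^t)$, and the preceding step gives
\[
  \Hc(\delb^t) = \imath_t^* H(t) = \imath_t^*(\imath_t)_*\Hc(\delb^{t_0}) = \Hc(\delb^{t_0}).
\]
The crucial asymmetry between the two connections now enters: the bundle metric $h$ on $E$ is time-independent, so the musical isomorphism $A \mapsto h(A\cdot,\cdot)$ from $\hol(\delb^t)$ onto $\Hc(\delb^t)$ does not depend on $t$. The invariance of $\Hc_q(\delb^t)$ therefore transfers immediately to $\hol_q(\delb^t) \subset \mathfrak{so}(E_q,h)$, and the connected subgroup $\Hol^0_q(\delb^t) \subset \operatorname{O}(E_q,h)$ determined by this fixed Lie algebra is likewise independent of $t$. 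Combining this with \eqref{eq:holrel} and the identity $\Hol^0_q(\delb^{t_0}) = \imath_{t_0}^{-1}\circ \Hol^0_q(\nabla^{t_0})\circ \imath_{t_0}$ yields the second displayed conclusion with $\psi_t = \imath_t \circ \imath_{t_0}^{-1}$.

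The main obstacle is thus entirely concentrated in the first step, which packages together both the forward- and backward-in-time invariance of the reduced holonomy: the parallelism of $H(t)$ under the time-dependent $\nabla^t$, the persistence of its subalgebra structure, and its containment of the image of $\Rm(g(t))$ all rely in an essential way on the results of \cite{KotschwarHolonomy}, which in turn rest on Hamilton's maximum-principle argument for $t > t_0$ and on the backward-uniqueness machinery of \cite{KotschwarRFBU, KotschwarFrequency} for $t < t_0$. Everything beyond that step is a formal consequence of the fact that the pull-back connection $\delb^t$ preserves the fixed metric $h$ and therefore lives in a fixed orthogonal group.
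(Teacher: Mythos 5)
Your proposal is correct and follows essentially the same route as the paper: both invoke Theorem 1.4 and Appendix A of \cite{KotschwarHolonomy} to show $H(t) = (\imath_t)_*\Hc(\delb^{t_0})$ is a $\nabla^t$-parallel subalgebra containing the image of $\Rm(g(t))$, identify it with $\Hc(\nabla^t)$, deduce its time-independence, pull back by $\imath_t$ to get $\Hc(\delb^t) = \Hc(\delb^{t_0})$, and then use the time-independence of $h$ to transfer the invariance to $\hol_q(\delb^t)$ and hence to $\Hol^0_q(\delb^t)$. The concluding conjugation identity via \eqref{eq:holrel} with $\psi_t = \imath_t\circ\imath_{t_0}^{-1}$ also matches the paper.
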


Theorem \ref{thm:fullholonomy} can now also be restated in terms of the family of connections $\delb^t$.
\begin{theorem}\label{thm:fullholonomy2} 
Provided $(M, g(t))$ is complete and of uniformly bounded curvature,
\begin{equation*}
\Hol_q(\delb^t) = \Hol_q(\delb^{t_0}) 
\end{equation*}
for all $t\in [0, T]$ and $q\in M$.  Consequently,
\begin{equation*}
\Hol_q(\nabla^t) = \psi_t\circ\Hol_q(\nabla^{t_0})\circ \psi^{-1}_t,
\end{equation*}
 where $\psi_{t} = \imath_{t}\circ \imath_{t_0}^{-1}$.
\end{theorem}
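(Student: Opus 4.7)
The plan is to use Theorem \ref{thm:holonomyalgebra} to show that parallel transport around any piecewise smooth loop based at $q$ stays, for every $t$, within $\Hol_q(\delb^{t_0})$; the reverse containment then follows by swapping the roles of $t$ and $t_0$, and the consequence for $\nabla^t$ reads off from \eqref{eq:holrel}. Fix a loop $\gamma:[0,1]\to M$ with $\gamma(0)=\gamma(1)=q$ and let $U^t(s):E_q\to E_{\gamma(s)}$ denote the $\delb^t$-parallel transport along $\gamma|_{[0, s]}$ with $U^t(0)=\operatorname{Id}$, so that $P^t_\gamma := U^t(1) \in \operatorname{O}(E_q, h)$ is the holonomy element under study.

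The crux of the argument is to identify the tensor $B^t := \partial\delb^t/\partial t \in \Gamma(T^*M\otimes\End(E))$ as a section of $T^*M\otimes\hol(\delb^t)$. Since $\delb^t$ is the $\imath_t$-pullback of $\nabla^t$, a direct calculation combining the Ricci flow evolution of the Christoffel symbols with the Uhlenbeck ODE \eqref{eq:uhlenbeck} should yield $B^t(X) = \imath_t^{-1}\circ K^t(X)\circ\imath_t$ with
\[
K^t(X)_{lj} \;=\; X^i\bigl(\nabla_l R_{ij} - \nabla_j R_{il}\bigr),
\]
the $\nabla\Rc$-term introduced by the Uhlenbeck trick precisely cancelling the non-skew part of $\partial\Gamma/\partial t$ and leaving a tensor that is manifestly skew in $(l, j)$. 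The contracted second Bianchi identity then rewrites $K^t(X)_{lj}$ as $X^i\nabla^k R_{ljki}$, which for each $i$ is the contraction in two indices of a tensor whose $(l,j)$-component is a 2-form in the $\nabla^t$-parallel subbundle $\Hc(\nabla^t)$. Hence $K^t(X) \in \hol_q(\nabla^t)$, and consequently $B^t(X) \in \hol_q(\delb^t)$.

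Granted this, differentiating $\delb^t_{\dot\gamma}U^t = 0$ in $t$ gives $\delb^t_{\dot\gamma}(\partial U^t/\partial t) = -B^t(\dot\gamma)U^t$, which variation of parameters solves as
\[
(U^t(s))^{-1}\frac{\partial U^t}{\partial t}(s) \;=\; -\int_0^s (U^t(r))^{-1}\,B^t(\dot\gamma(r))\,U^t(r)\,dr.
\]
Because $\hol(\delb^t)$ is $\delb^t$-parallel and $B^t$ takes values in it, the integrand lies in $\hol_q(\delb^t)$ at every $r$. Setting $\mathfrak{h}_q := \hol_q(\delb^{t_0})$, which by Theorem \ref{thm:holonomyalgebra} equals $\hol_q(\delb^t)$ for all $t$, and taking $s=1$, one finds that $Q^t := (P^{t_0}_\gamma)^{-1}P^t_\gamma$ satisfies $\dot Q^t = Q^t A(t)$ with $A(t) \in \mathfrak{h}_q$ and $Q^{t_0} = \operatorname{Id}$. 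Since $\mathfrak{h}_q$ is the Lie algebra of the closed subgroup $\Hol^0_q(\delb^{t_0})$, the standard fact that a curve with logarithmic derivative in $\mathfrak{h}_q$ remains in the associated coset forces $Q^t \in \Hol^0_q(\delb^{t_0})$, so $P^t_\gamma \in P^{t_0}_\gamma\cdot\Hol^0_q(\delb^{t_0}) \subset \Hol_q(\delb^{t_0})$.

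The main obstacle is the second step: exhibiting $B^t$ as a section of $T^*M\otimes\hol(\delb^t)$. The cancellation between the Ricci-flow term and the Uhlenbeck correction is essential, since the analogous tensor $\partial\nabla^t/\partial t$ on $TM$ retains a symmetric part and so cannot lie in $\hol(\nabla^t)$; it is precisely the passage to the fixed-metric bundle $(E, h)$ via $\imath_t$ that realizes this cancellation, and the Bianchi identity is then what reduces the skew remainder to an expression controlled by the parallelism of $\Hc$ already furnished by Theorem \ref{thm:holonomyalgebra}.
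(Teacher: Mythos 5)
Your proposal is correct and follows essentially the same route as the paper: the identification of $\partial_t\delb^t$ as an $\hol(\delb^t)$-valued one-form via the Uhlenbeck cancellation and the contracted second Bianchi identity is exactly the content of Proposition \ref{prop:staystangent} (where the resulting endomorphism is written as $\imath^{-1}\operatorname{div}\Rm(\dot\gamma)\,\imath$), and your variation-of-parameters formula together with the coset argument for $Q^t$ reproduces Proposition \ref{prop:dtp} and Lemma \ref{lem:lie}. The only differences are cosmetic: you integrate the $\hol_q$-valued logarithmic derivative directly rather than pairing against $\hol_q^{\perp}(\delb)$, and you normalize at the identity to land in the coset $P^{t_0}_\gamma\cdot\Hol^0_q(\delb^{t_0})$ rather than applying the integral-curve lemma to $\Hol_q(\delb^{t_0})$ itself.
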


\section{Invariance of the full holonomy group}
Given a piecewise smooth curve $\gamma:[a, b]\to M$, we will use $D_s = D_s^t$ to denote the covariant derivative along $\gamma$ induced by $\delb = \delb^t$.  We will temporarily suppress the subscript $t$ on the maps $\imath = \imath_t$. 

The key to the proof of Theorem \ref{thm:fullholonomy2} is the following identity.
\begin{proposition}\label{prop:staystangent}
Let $\gamma: [a, b]\to M$ denote a smooth curve and $V = V(s,t)$ be a smooth family of smooth sections of $E$ along $\gamma = \gamma(s)$ which is parallel along $\gamma$ with respect to $D_s = D_s^t$ for all $t \in [0,T]$. Then $V$ satisfies
\begin{equation*}
D_s \frac{\partial}{\partial t} V = - \imath^{-1} \mathrm{div} \Rm (\dot{\gamma}) \imath V.
\end{equation*}
\end{proposition}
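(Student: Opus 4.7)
The strategy is to push the identity over to $TM$ via the isomorphism $\imath = \imath_t$ and combine three ingredients: the defining ODE \eqref{eq:uhlenbeck} for $\imath_t$, the variation of the Christoffel symbols under Ricci flow, and the once-contracted second Bianchi identity.

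Concretely, I would set $W(s,t) = \imath_t V(s,t)$, so that $W(\cdot, t)$ is a section of $TM$ along $\gamma$. By the definition \eqref{eq:delbtdef} of $\delb^t$, the hypothesis $D_s V \equiv 0$ becomes $\nabla^t_{\dot\gamma} W \equiv 0$ for every $t$. Differentiating $W = \imath V$ in $t$ and using $\partial_t \imath = \Rc \circ \imath$ gives $\imath(\partial_t V) = \partial_t W - \Rc(W)$, and applying $\nabla^t_{\dot\gamma}$ together with the definition of $\delb^t$ then yields
\[
  \imath(D_s \partial_t V) = \nabla^t_{\dot\gamma}(\partial_t W) - \nabla^t_{\dot\gamma}(\Rc(W)).
\]

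A short local-coordinate computation shows $[\partial_t, \nabla^t_{\dot\gamma}] W = (\partial_t \Gamma^t)(\dot\gamma, W)$; since $\nabla^t_{\dot\gamma} W$ vanishes identically in $t$, this reduces the first term on the right to $-(\partial_t \Gamma^t)(\dot\gamma, W)$. Substituting the Ricci-flow variation formula
\[
  \partial_t \Gamma^k_{ij} = -g^{kl}\bigl(\nabla_i R_{jl} + \nabla_j R_{il} - \nabla_l R_{ij}\bigr)
\]
and expanding $\nabla^t_{\dot\gamma}\Rc(W)$ (using $\nabla^t_{\dot\gamma} W = 0$ once more), a pair of $\nabla\Rc$ terms cancel and one is left with a single combination of the form $\nabla_j R_{il} - \nabla_l R_{ij}$ contracted against $\dot\gamma$ and $W$.

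The only real obstacle is then to recognize this residual expression as $-\mathrm{div}\Rm(\dot\gamma)(W)$, which is exactly the content of the once-contracted second Bianchi identity $\nabla^m R_{mijk} = \nabla_i R_{jk} - \nabla_j R_{ik}$. Applying $\imath^{-1}$ finishes the proof. The substantive work is thus a bounded index calculation; the subtleties lie in commuting $\partial_t$ past $\nabla^t$ cleanly and in aligning sign and index conventions with whichever normalization of $\mathrm{div}\Rm$ is intended in the statement.
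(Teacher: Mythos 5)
Your proposal is correct and follows essentially the same route as the paper: both arguments rest on the same three ingredients---the Uhlenbeck ODE $\partial_t \imath = \Rc\circ\imath$, the variation formula for the Christoffel symbols under the Ricci flow, and the once-contracted second Bianchi identity---and the cancellation you describe (the $\nabla_i R_{jl}$ terms dropping out, leaving $\nabla_j R_{il}-\nabla_l R_{ij}$ contracted against $\dot{\gamma}$ and $W$) checks out with the correct sign. The only difference is organizational: you push the computation to $TM$ via $W=\imath V$ and commute $\partial_t$ past $\nabla^t_{\dot{\gamma}}$, whereas the paper differentiates the parallel-transport equation in $t$ and computes $\partial_t \bar{\Gamma}^{\beta}_{i\alpha}$ directly on $E$.
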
 
Here, $\mathrm{div} \Rm $ is the section of $T^*M \otimes \End(TM)$ defined as follows: for any $p \in M$ and $X \in T_pM$, $\mathrm{div} \Rm_p (X) \in \mathrm{End}(T_p M)$ acts on $Y \in T_p M$ by
\begin{equation*}
\mathrm{div} \Rm_p (X)Y = \sum_{l=1}^n \nabla_{e_l} R_p(e_l,X)Y,
\end{equation*}
where $\{e_l\}_{l=1}^{n}$ is a $g(t)$-orthonormal basis of $T_pM$.
Note that $\mathrm{div}\Rm_p(X)\in \hol_p(\nabla^t)$ for any $X\in T_pM$. Indeed, the curvature endomorphisms $R_p \in T^{(3,1)}(T_pM)$ belong to 
$\Hc_p(t) \otimes \hol_p(\nabla^t)$, and so do $\nabla_{X_1}\nabla_{X_2} \cdots \nabla_{X_k} R_p$ for any $X_1, X_2, \ldots, X_k\in T_pM$.

\begin{proof}[Proof of Proposition \ref{prop:staystangent}]
Let $s_0 \in [a, b]$ be fixed. Choose local frames $(E_\alpha)_{\alpha=1}^n$ for $E$ and $(e_i)_{i=1}^n$ for $TM$ on a neighborhood of $\gamma(s_0)$,
and let $\bar{\Gamma}^{\beta}_{i \alpha}$ be the coefficients of $\delb$ in terms of these frames, i.e., $\delb_{e_i} E_{\alpha} = \bar{\Gamma}_{i\alpha}^{\beta}E_{\beta}$. 

First, since $V(\cdot, t)$ is parallel for all $t$,
\begin{equation*}
0 = \frac{\partial}{\partial t} D_s V = \frac{\partial^2 V^\alpha}{\partial s \partial t} E_\alpha + \frac{\partial V^\alpha}{\partial t} \delb_{\dot{\gamma}} E_\alpha + \dot{\gamma}^i V^\alpha \frac{\partial \bar{\Gamma}^\beta_{i \alpha}}{\partial t} E_\beta
\end{equation*}
at any $(s_0, t)$. On the other hand, for $s$ near $s_0$, 
\[
\pd{V}{t}(s, t) = \pd{V^{\alpha}}{t}(s, t)E_{\alpha}(\gamma(s)),
\]
and so
\begin{align}
\nonumber
D_s \frac{\partial V}{\partial t} & = \frac{\partial^2 V^\alpha}{\partial s \partial t} E_\alpha + \frac{\partial V^\alpha}{\partial t} \delb_{\dot{\gamma}} E_\alpha\\
\label{eq:diveq}
& = - \dot{\gamma}^i V^\alpha \frac{\partial \bar{\Gamma}^\beta_{i \alpha}}{\partial t} E_\beta
\end{align}
at $(s_0, t)$.

Now, fix $\alpha$ and temporarily write $X = \imath E_\alpha$. Then we have $\frac{\partial}{\partial t} X = \Rc (X)$,
and, using that
\[
 \frac{\partial \Gamma^k_{ij}}{\partial t}  = \nabla^k R_{ij} -\nabla_i R^k_j - \nabla_j R_i^k, \quad \nabla^k R_{ji} - \nabla_j R_{i}^k = g^{lm}\nabla_l R_{mij}^k,
\]
where $\Gamma_{ij}^k$ denotes the components of $\nabla$ in terms of $\{e_i\}_{i=1}^{n}$,
we see that
\begin{equation*}
\begin{split}
\frac{\partial}{\partial t} (\nabla_{e_i} X) & = \nabla_{e_i} \left( \frac{\partial}{\partial t} X \right) + \left( \frac{\partial \Gamma^k_{ij}}{\partial t} \right) X^j e_k \\
& = \nabla_{e_i}(\Rc(X)) - (\nabla_{e_i}\Rc)(X) - (\nabla_X \Rc)(e_i) + (\nabla^k \Rc)(e_i, X)e_k \\
& =  \Rc (\nabla_{e_i} X ) + \mathrm{div} \Rm (e_i) X.
\end{split}
\end{equation*}
Thus,
\begin{equation*}
\begin{split}
\frac{\partial \bar{\Gamma}^\beta_{i \alpha}}{\partial t} E_\beta & = \frac{\partial}{\partial t} \left( \delb_{e_i} E_\alpha \right) \\
& = \frac{\partial}{\partial t} \left( \imath^{-1} \nabla_{e_i} \imath (E_\alpha) \right) \\ 
& = - \imath^{-1} \circ \Rc (\nabla_{e_i} \imath E_\alpha) + \imath^{-1} \left(\frac{\partial}{\partial t} (\nabla_{e_i} \imath E_\alpha )\right) \\
& =  \imath^{-1} \mathrm{div} \Rm (e_i)\imath E_\alpha.\\
\end{split}
\end{equation*}
Inserting this expression into \eqref{eq:diveq} for $D_s \frac{\partial V}{\partial t}$ completes the proof.
\end{proof}

Next we use Proposition \ref{prop:staystangent} to determine the evolution of parallel transport along a fixed loop. 
\begin{proposition}\label{prop:dtp} Let $q\in M$ and let $\gamma:[0, 1]\to M$ be a piecewise smooth loop with $\gamma(0) = \gamma(1) = q$.
Let $P_{s,t} : E_q \to E_{\gamma(s)}$ be parallel translation along $\gamma$ with respect to $D_s = D_s^t$. Then
\begin{equation*}
\frac{\partial}{\partial t} P_{1,t} = P_{1,t} B
\end{equation*}
for some $B = B(t) \in \hol_q (\delb)$.
\end{proposition}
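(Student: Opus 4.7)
The plan is to apply Proposition~\ref{prop:staystangent} to the parallel family $V(s,t) := P_{s,t}(V_0)$ for an arbitrary $V_0 \in E_q$, recast the resulting identity as a first-order ODE for the pullback of $\partial_t V$ under $P_{s,t}$, and then read off the claim from the algebraic properties of $\hol(\delb)$.

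First I would treat the case when $\gamma$ is smooth. For fixed $V_0 \in E_q$ set $V(s,t) := P_{s,t}(V_0)$, so $V(\cdot,t)$ is $D_s^t$-parallel for each $t$ and $V(0,t) \equiv V_0$; in particular $(\partial_t V)(0,t) = 0$. Proposition~\ref{prop:staystangent} gives
\begin{equation*}
 D_s \frac{\partial V}{\partial t} = -\imath^{-1}\,\mathrm{div}\,\Rm(\dot\gamma)\,\imath\, V.
\end{equation*}
I would then introduce the $E_q$-valued curve $W(s,t) := P_{s,t}^{-1}(\partial_t V)(s,t)$. Since $P_{s,t}$ carries constant elements of $E_q$ to $\delb^t$-parallel sections along $\gamma$, we have $D_s(P_{s,t}W) = P_{s,t}\,\partial_s W$, and the previous identity becomes
\begin{equation*}
 \frac{\partial W}{\partial s}(s,t) = -A(s,t)\,V_0,\quad A(s,t) := P_{s,t}^{-1}\circ \imath^{-1}\mathrm{div}\,\Rm(\dot\gamma)\,\imath\circ P_{s,t},
\end{equation*}
with $W(0,t)=0$.

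The crux is showing that $A(s,t)\in \hol_q(\delb^t)$ for every $s$. The remark following Proposition~\ref{prop:staystangent} places $\mathrm{div}\,\Rm_{\gamma(s)}(\dot\gamma)$ in $\hol_{\gamma(s)}(\nabla^t)$, and hence its $\imath$-conjugate in $\hol_{\gamma(s)}(\delb^t)$. The standard fact that $\hol(\delb^t)\subset\End(E)$ is a $\delb^t$-parallel subbundle---holonomy algebras at different basepoints are related by conjugation with parallel transport---then ensures that further conjugation by $P_{s,t}$ returns an element of the fixed subspace $\hol_q(\delb^t)\subset \End(E_q)$. Integrating from $0$ to $1$ and using that $\hol_q(\delb^t)$ is a linear subspace,
\begin{equation*}
 W(1,t) = B(t)\,V_0,\quad B(t) := -\int_0^1 A(r,t)\,dr \in \hol_q(\delb^t) = \hol_q(\delb),
\end{equation*}
the last identification by Theorem~\ref{thm:holonomyalgebra}. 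Then $(\partial_t P_{1,t}) V_0 = P_{1,t}W(1,t) = P_{1,t}B(t)V_0$ for every $V_0$, yielding $\partial_t P_{1,t} = P_{1,t}B(t)$.

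For a piecewise smooth loop $\gamma = \gamma_k \ast \cdots \ast \gamma_1$ with smooth segments from $p_{j-1}$ to $p_j$ (and $p_0 = p_k = q$), the argument above applied to each $\gamma_j$ gives $\partial_t P^{\gamma_j}_{1,t} = P^{\gamma_j}_{1,t} B_j(t)$ with $B_j(t)\in \hol_{p_{j-1}}(\delb)$. Differentiating $P_{1,t}^\gamma = P_{1,t}^{\gamma_k}\cdots P_{1,t}^{\gamma_1}$ and using the parallel-subbundle property of $\hol(\delb^t)$ to conjugate each $B_j(t)$ back to $\hol_q(\delb)$ along the partial transport $Q_j := P_{1,t}^{\gamma_{j-1}}\cdots P_{1,t}^{\gamma_1}$ produces the required $B(t) = \sum_j Q_j^{-1}B_j(t) Q_j$. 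The only nontrivial ingredient beyond Proposition~\ref{prop:staystangent} is thus the parallel-subbundle property of $\hol(\delb^t)$ (equivalently, the conjugacy of holonomy algebras under parallel transport); once this is invoked, the estimate is purely algebraic and the rest is bookkeeping.
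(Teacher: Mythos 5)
Your proposal is correct and follows essentially the same route as the paper: apply Proposition~\ref{prop:staystangent} to $V=P_{s,t}W$, observe that $\tfrac{d}{ds}\bigl(P_{s,t}^{-1}\partial_t P_{s,t}\bigr)$ lies in $\hol_q(\delb)$ because $\mathrm{div}\,\Rm(\dot\gamma)\in\hol(\nabla)$ and $\hol(\delb)$ is invariant under parallel translation, then conclude that $P_{1,t}^{-1}\partial_t P_{1,t}\in\hol_q(\delb)$. The only differences are cosmetic: you integrate directly and invoke closedness of the linear subspace (and handle the piecewise case by composing segments), while the paper pairs against an arbitrary element of $\hol_q^{\perp}(\delb)$ and uses continuity of the resulting function across the breakpoints.
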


\begin{proof}
 It suffices to show that
\begin{equation*}
P^{-1}_{1,t} \frac{\partial}{\partial t} P_{1,t} \in \hol_q (\delb).
\end{equation*}
Let $0 = a_0 < a_1 < \ldots < a_k = 1$ be such that $\gamma|_{[a_{i-1}, a_i]}$ is smooth and fix an arbitrary $W\in E_q$. Applying the previous lemma to $V = P_{s, t} W$ on any subinterval $[a_{i-1}, a_i]$, we find that
\begin{equation*}
\begin{split}
\frac{d}{ds} \left( P^{-1}_{s,t} \frac{\partial}{\partial t} P_{s,t} W \right)& =P^{-1}_{s,t} \left( D_s \frac{\partial}{\partial t} P_{s,t}W \right) \\
& = -P^{-1}_{s, t} \left( \imath^{-1} \mathrm{div} \Rm_{\gamma(s)} (\dot{\gamma}) \imath (P_{s,t}W)\right).
\end{split}
\end{equation*}
In other words,
\[
  \frac{d}{ds} \left(P^{-1}_{s,t} \frac{\partial}{\partial t} P_{s,t}\right) =  -P^{-1}_{s,t} \circ \imath^{-1} \circ \mathrm{div} \Rm_{\gamma(s)} (\dot{\gamma}) \circ \imath \circ P_{s,t} \doteqdot A(s, t).
\]

But $\mathrm{div} \Rm_{\gamma(s)} (\dot{\gamma})\in \hol_{\gamma(s)}(\nabla)$ for each $s$, so $\imath^{-1} \circ \mathrm{div} \Rm_{\gamma(s)} (\dot{\gamma}) \circ \imath \in \hol_{\gamma(s)}(\delb)$ for each $s$. Since $\hol(\delb)$ is invariant under parallel translation, it follows that
$A(s, t)\in \hol_q(\delb)$
for all $s\in (a_{i-1}, a_i)$ and $t\in [0, T]$.

Now let $\hol_q^{\perp}(\delb)$ denote the orthogonal complement of $\hol_q(\delb)$ in $\End(E)$ and let $L\in \hol_q^{\perp}(\delb)$ be arbitrary. Then
\[
 F(s) = \left\langle L, P^{-1}_{s,t} \frac{\partial}{\partial t} P_{s,t}\right\rangle_{h_q} = \left\langle P_{s,t} \circ L \circ P_{s,t}^{-1} ,  \frac{\partial}{\partial t} P_{s,t}\circ P_{s,t}^{-1}\right\rangle_{h_{\gamma(s)}}
 \]
 is continuous on $[0, 1]$ and smooth on each interval $(a_{i-1}, a_i)$. For $s$ in any such interval,
 \[
  F^{\prime}(s) =  \left\langle L, A(s, t) \right\rangle_{h_q} = 0.
 \]
Thus $F|_{[a_{i-1}, a_i]}$ is constant for each $i$. 

But $P_{0,t} = P^{-1}_{0,t} = \mathrm{Id}$ for all $t$, so $P^{-1}_{0,t} \frac{\partial}{\partial t} P_{0,t} = 0$
and $F(0) = 0$. Thus $F(s) = 0$ for all $s\in [0, 1]$. Since $L\in \hol_q^{\perp}(\delb)$ was arbitrary, it follows that
\[
B(t) \doteqdot P^{-1}_{1,t} \frac{\partial}{\partial t} P_{1,t}  \in \hol_q (\delb),
\]
completing the proof.
\end{proof}

We will use Proposition \ref{prop:dtp} in conjunction with the following simple fact.
\begin{lemma}\label{lem:lie}
  Suppose $H$ is a Lie subgroup of the Lie group $G$, $B(t)$ is a smooth family of tangent vectors in $T_e H\subset T_eG$ for $t\in [0, T]$, and
$X = X(g, t)$ is the left-invariant
extension of $B(t)$ to $G$ for each $t$. If $\alpha:[0, T]\to G$ is an integral curve of $X$ passing through $a\in H$ at $t= t_0$ then $\alpha(t) \in H$ for all $t\in [0, T]$.
\end{lemma}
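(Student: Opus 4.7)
The plan is to lift $\alpha$ to an integral curve in $H$ itself and then conclude via the uniqueness of ODE solutions in $G$. First, by replacing $\alpha(t)$ with $a^{-1}\alpha(t)$, I reduce to the case $a = e$: left-translation by $a^{-1}$ preserves integral curves of $X$ because $X$ is left-invariant, and it preserves $H$ since $a \in H$. The condition $\alpha(t)\in H$ is unchanged, since $aH = H$.

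Next, I would define a time-dependent vector field $X_H$ intrinsically on $H$ by $X_H(h,t) = (L_h)_{*} B(t) \in T_h H$, where $L_h$ denotes left translation in $H$. This is well-defined because $B(t) \in T_e H$, and the inclusion $i: H \hookrightarrow G$ intertwines $X_H$ with $X$ in the sense that $di(X_H(h,t)) = X(i(h),t)$. I would then solve the initial-value problem
\[
\dot{\gamma}(t) = X_H(\gamma(t),t), \quad \gamma(t_0) = e,
\]
on $H$ and push the solution forward via $i$ to obtain an integral curve of $X$ in $G$ passing through $e$ at $t = t_0$. By uniqueness of solutions to ODEs in $G$, this pushforward must coincide with $\alpha$, whence $\alpha(t) = i(\gamma(t)) \in H$ for all $t \in [0,T]$.

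The main obstacle is showing that $\gamma$ extends to all of $[0,T]$, which is delicate because $H$ is only assumed to be a (possibly immersed, not embedded) Lie subgroup of $G$; otherwise it would suffice to argue that $\{t : \alpha(t) \in H\}$ is both open and closed, the open part following from local existence on $H$ together with uniqueness on $G$. For global existence of $\gamma$, I would argue as follows: in a fixed chart around $e \in H$, Picard--Lindel\"of yields local existence of $\gamma$ on an interval of length $\delta > 0$, where $\delta$ depends only on $\sup_{[0,T]}|B|$ and on the chart. By the left-invariance of $X_H$, the same $\delta$ works uniformly at every starting point in $H$ and every starting time in $[0,T]$, so $\gamma$ can be continued iteratively by steps of length $\delta$, forward and backward, until it covers all of $[0,T]$. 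Combining this with the uniqueness argument in $G$ completes the proof.
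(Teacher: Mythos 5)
Your proposal is correct and follows essentially the same route as the paper: form the left-invariant extension of $B(t)$ intrinsically on $H$, solve the ODE there, include the solution into $G$, and invoke uniqueness of integral curves in $G$. The extra care you take with global existence on $[0,T]$ (uniform step size from left-invariance) addresses a point the paper's proof leaves implicit, but the argument is the same.
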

\begin{proof}
Since $B(t)\in T_e H$, we may separately form the left-invariant extension $\bar{X}$ of $B(t)$ on $H$ and obtain $\bar{\alpha}:[0, T]\to H$
solving $\bar{\alpha}^{\prime}(t) = \bar{X}(\bar{\alpha}(t), t)$ with $\bar{\alpha}(t_0) = a$. Then, the inclusion $\iota\circ \bar{\alpha}$ of 
$\bar{\alpha}$ into $G$ 
will be an integral curve of $X$ passing through $a$ at $t= t_0$ whose image lies in $H\subset G$. By uniqueness, it must coincide with $\alpha$.
\end{proof}

Now we put the above pieces together to prove Theorem \ref{thm:fullholonomy2}.
\begin{proof}[Proof of Theorem \ref{thm:fullholonomy2}] Fix $q\in M$ and $t_0\in [0, T]$.
We will show that $\Hol_q (\delb^t) \subset \Hol_q (\delb^{t_0})$ for all $t\in [0, T]$. 
Let $\gamma:[0, 1]\to M$ be an arbitrary piecewise-smooth loop based at $q$ and let $P(t) = P_{1,t}: E_q\to E_q$ be
parallel translation along $\gamma$ with respect to the covariant derivative $D_s = D_s^t$ relative to $\delb^t$. By Proposition \ref{prop:dtp}, $\pd{P}{t} = P B$
for some $B = B(t)$ in the time-invariant subalgebra $\hol_q(\delb^t) = \hol_q(\delb^{t_0})\subset \mathfrak{so}(E_q, h)$. For each $t \in [0,T]$, let $X(\cdot, t)$ be the left-invariant extension of $B(t)$ to all of $O(E_q, h)$, given by $X(A, t) = A B(t)$.

Then $P(t)$ is an integral curve of the left-invariant vector field $X(\cdot, t)$, and applying  Lemma \ref{lem:lie} with $H = \Hol_q(\delb^{t_0})$, $G = \operatorname*{O}(E_q, h)$, $\alpha(t) = P(t)$, and $a = P(t_0)$, we obtain that $P(t) \in \Hol_q(\delb^{t_0})$ for all $t$. But $P(t)$ represents $\delb^t$-parallel translation along an arbitrary piecewise smooth loop $\gamma$
based at $q$, so $\Hol_q(\delb^t) \subset \Hol_q (\delb^{t_0})$ for all $t$ as claimed.
\end{proof}

\section{Preservation of parallel tensors}
\label{sec:paralleltensors}
One consequence of Theorem \ref{thm:fullholonomy} is that if $g(t)$ is a complete solution to the Ricci flow of uniformly bounded curvature on $M\times [0, T]$
and $A_0$ is a smooth $\nabla^{t_0}$-parallel tensor for some $t_0 \in [0, T]$, then there is a smooth family
$A(t)$ of  $\nabla^t$-parallel tensors on $M\times [0, T]$ with $A(t_0) = A_0$. 
\begin{corollary}\label{cor:paralleltensors}
 If the tensor field $A_0\in \Gamma(T^{k,l}(M))$ is $\nabla^{t_0}$-parallel for some $t_0\in [0, T]$, then $A(t) = (\imath_t)_{\ast}\imath_{t_0}^{\ast} A_0$
 is $\nabla^t$-parallel for all $t$.
\end{corollary}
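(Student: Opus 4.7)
The plan is to reduce the problem via Uhlenbeck's trick to a statement about parallel sections on the fixed bundle $E$ and then to exploit Theorem \ref{thm:fullholonomy2} through a short linear ODE argument. Set $B = \imath_{t_0}^{\ast} A_0$, viewed as a section of the appropriate tensor bundle built from $E$. Since $\imath_{t_0}:(E, h)\to (TM, g(t_0))$ is a bundle isometry intertwining $\delb^{t_0}$ with $\nabla^{t_0}$, $B$ is $\delb^{t_0}$-parallel. Because $\imath_t$ intertwines $\delb^t$ with $\nabla^t$ at every $t$, it suffices to prove that $B$ is $\delb^t$-parallel for all $t\in [0, T]$; then $A(t) = (\imath_t)_{\ast} B$ will be $\nabla^t$-parallel.

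By Theorem \ref{thm:fullholonomy2}, $\Hol_q(\delb^t) = \Hol_q(\delb^{t_0})$ as subgroups of the fixed group $\operatorname{O}(E_q, h)$ for every $q \in M$. Since $B$ is $\delb^{t_0}$-parallel and $M$ is connected, $B(q)$ is fixed by $\Hol_q(\delb^{t_0}) = \Hol_q(\delb^t)$; equivalently, $\hol_q(\delb^t)$ annihilates $B(q)$ under its natural derivation action on the tensor bundle.

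Next, I would observe that the argument in the proof of Proposition \ref{prop:dtp} in fact shows, with no essential change, that
\[
P^{-1}_{s,t}\,\frac{\partial}{\partial t} P_{s,t}\in \hol_{p_0}(\delb^t)
\]
for every $s\in [0, 1]$ and every piecewise-smooth curve $\gamma:[0, 1]\to M$ with $\gamma(0) = p_0$; the loop hypothesis enters there only to reinterpret the $s = 1$ value as an element of the holonomy algebra at the endpoint. Fix such a curve $\gamma$ terminating at an arbitrary point $p = \gamma(1)$, and let $\hat{P}_{s,t}$ denote the induced $\delb^t$-parallel transport on the tensor bundle over $E$. Set
\[
F(s, t) = \hat{P}^{-1}_{s,t}\, B(\gamma(s)).
\]
Then $F(s, t_0) = B(p_0)$ by $\delb^{t_0}$-parallelism of $B$, and differentiation in $t$ with $s$ held fixed yields the linear ODE $\partial_t F = -\hat{R}(s, t)\, F$, where $\hat{R}(s,t)$ is the derivation action on the tensor bundle of $P^{-1}_{s,t}\,\partial_t P_{s,t}\in \hol_{p_0}(\delb^t)$. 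By the previous paragraph, $\hat{R}(s,t)$ annihilates $B(p_0)$, so the constant function $F\equiv B(p_0)$ solves this ODE with the correct initial value at $t = t_0$, and uniqueness forces $F(s, t) = B(p_0)$ for all $(s, t)$. This is precisely the statement that $B(\gamma(s)) = \hat{P}_{s, t} B(p_0)$; that is, $B$ is $\delb^t$-parallel along $\gamma$. Since $\gamma$ was arbitrary, $\delb^t B \equiv 0$.

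The crux is conceptual rather than computational: Theorem \ref{thm:fullholonomy2} realizes each $\Hol_q(\delb^t)$ as a subgroup of the time-independent group $\operatorname{O}(E_q, h)$, so the condition ``$B(q)$ is fixed by the holonomy'' retains the same meaning for every $t$. Once this is in hand, the extension of Proposition \ref{prop:dtp} to non-loops and the linear ODE uniqueness argument above carry the rest of the proof without difficulty.
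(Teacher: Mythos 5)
Your proof is correct, and it follows the same overall reduction as the paper: pull $A_0$ back to $B_0=\imath_{t_0}^{\ast}A_0$ on $(E,h)$ and exploit the time-independence of the holonomy of $\delb^t$. The difference is in how the step ``$B_0$ is $\delb^t$-parallel'' is justified. The paper's own proof is two sentences and deduces this directly from Theorem \ref{thm:fullholonomy2} via the holonomy principle; read literally, that principle only says that $B_0(q)$, being $\Hol_q(\delb^{t_0})=\Hol_q(\delb^t)$-invariant, admits \emph{some} $\delb^t$-parallel extension, and one must still check that this extension coincides with $B_0$ (two connections with the same holonomy group need not have the same parallel sections in general). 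Your argument supplies exactly this: you correctly observe that the proof of Proposition \ref{prop:dtp} applies verbatim to open curves, since the loop hypothesis is used there only to interpret $P_{1,t}$ as a holonomy element, and the resulting membership $P_{s,t}^{-1}\partial_t P_{s,t}\in\hol_{p_0}(\delb)$ together with linear ODE uniqueness in $t$ shows that the $\delb^t$-parallel transport of $B_0(p_0)$ along any curve is independent of $t$, hence equal to $B_0(\gamma(s))$. (For the annihilation step only the Lie algebra statement of Theorem \ref{thm:holonomyalgebra} is needed, though invoking Theorem \ref{thm:fullholonomy2} as you do is equally valid.) An even shorter route to the same end, implicit in the computation in Proposition \ref{prop:staystangent}, is to note that $\partial_t\delb^t=\imath_t^{-1}\circ\operatorname{div}\Rm(\cdot)\circ\imath_t$ takes values in the time-independent bundle $\hol(\delb)$, whose derivation action annihilates the $\delb^{t_0}$-parallel section $B_0$ at every point; integrating in $t$ gives $\delb^t B_0=\delb^{t_0}B_0=0$ without introducing curves at all. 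In short, your proposal is a correct and, if anything, more complete version of the paper's argument.
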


Indeed, the section $B_0 = \imath^*_{t_0} A_0$ of the corresponding tensor product of $E$ is $\delb^{t_0}$-parallel, and Theorem \ref{thm:fullholonomy2}
shows that
$\Hol_q(\delb^{t})$ is independent of time for each $q$. So $B_0$ is $\delb^t$-parallel, and $A(t) = (\imath_t)_*B_0$ therefore $\nabla^t$-parallel,
for each $t\in [0, T]$. The family $A(t)$ in Corollary \ref{cor:paralleltensors} can be explicitly described as the solution of the fiberwise linear system
 \begin{align*}
 \frac{\partial}{\partial t} A^{a_1 \dots a_l}_{b_1 \dots b_k}&=  R^c_{b_1} A^{a_1 \dots a_l}_{c b_2  \dots b_k} + \dots +
 R^c_{b_k} A^{a_1 \dots a_l}_{b_1 b_2  \dots c}- R^{a_1}_c A^{c a_2 \dots a_l}_{b_1 \dots b_k} - \dots - R^{a_l}_c A^{a_1 \dots c}_{b_1 \dots b_k},\\
 A(t_0) &= A_0,
 \end{align*}
of ordinary differential equations.

In some cases, the extended family of parallel tensors $A(t)$ will be independent of time. This is true, for example, when the time-slice 
$(M, g(t_0)) = (\tilde{M}\times \hat{M}, \tilde{g}\oplus \hat{g})$ is a Riemannian product,
and $A_0$ is one of the associated complementary orthogonal projections $\tilde{P}$, $\hat{P}\in \End(TM)$. It is also true when the time-slice $(M, g(t_0))$
is K\"ahler and $A_0 = J$ is its complex structure.  We give the argument for these two special cases below.

\subsubsection{Product structures}
Suppose $M = \tilde{M}\times \hat{M}$ and $g(t_0) = \tilde{g}\oplus \hat{g}$ is a Riemannian product. Let $\tilde{P}_0$ and $\hat{P}_0$
denote the orthogonal projections onto the subbundles of $TM$ isomorphic to $T\tilde{M}$ and $T\hat{M}$, respectively. These sections
of $\End(TM)$ are parallel at $t= t_0$, and therefore, by Corollary \ref{cor:paralleltensors}, their extensions defined by
\begin{align}
\begin{split}
\label{eq:peq}
\pd{\tilde{P}}{t} &= \tilde{P} \circ \Rc - \Rc \circ \tilde{P}, \quad
            \tilde{P}(t_0) = \tilde{P}_0,\\
        \pd{\hat{P}}{t} &= \hat{P} \circ \Rc - \Rc \circ \hat{P}, \quad
            \hat{P}(t_0) = \hat{P}_0,
\end{split}
\end{align}
are $\nabla^t$-parallel for each $t$. It also follows directly from \eqref{eq:peq} that $\tilde{P}(t)$ and $\hat{P}(t)$ will remain complementary $g(t)$-orthogonal projections.  But these properties imply that $\tilde{P}\circ \Rc = \Rc \circ \tilde{P}$ and $\hat{P}\circ \Rc = \Rc \circ \hat{P}$ identically on $M\times[0, T]$. So $\pd{\tilde{P}}{t} = \pd{\hat{P}}{t} = 0$,
and the product structure these projections define is constant in time.

\subsubsection{Complex structures}
Similarly, suppose $(M, g(t_0))$ is K\"ahler with complex structure $J_0$. As above, the family $J = J(t)$ defined by 
\[
        \pd{J}{t} = J \circ \Rc - \Rc \circ J, \quad J(t_0) = J_0,
\]
will be $\nabla^t$-parallel and will satisfy $J^2=-\operatorname{Id}$ and $g(J\cdot, J\cdot) = g(\cdot, \cdot)$ for all $t$. However, these conditions
likewise imply that $\Rc\circ J = J\circ \Rc$ for each $t$, and hence that $\pd{J}{t} = 0$, so  $(M, g(t))$ is K\"ahler relative to the \emph{fixed}
complex structure $J_0$ for all $t$. (Compare Section 3 of \cite{KotschwarKaehler}.)

\end{document}